\newtheorem{theorem}{Theorem}[section]
\newtheorem{conjecture}[theorem]{Conjecture}
\newtheorem{example}[theorem]{Example}
\newtheorem{lemma}[theorem]{Lemma}
\begin{document}

\baselineskip=16pt
\title{On some conjectures related to finite nonabelian simple groups\thanks{The project
was partially supported by NSFC (11671063) and the Natural Science
Foundation of CSTC (cstc2018jcyjAX0060).}}

\author{Jinbao Li$^a$, Wujie Shi$^{a, b,}$\thanks{Corresponding author.}\\
{\small $^a$Department  of Mathematics, Chongqing University of
Arts and Sciences}\\
{\small  Chongqing 402160, P. R. China}\\
\\
{\small $^b$School of Mathematics, Suzhou University}\\
{\small Suzhou, 215006, P. R. China}\\
{\small\  E-mail: leejinbao25@163.com, shiwujie@outlook.com}}
\date{}
\maketitle
\begin{abstract} In this note we provide some counterexamples for the conjecture of Moret\'{o} on
finite simple groups, which says that any finite simple group  $G$
can determined in terms of  its order $|G|$ and the number of
elements of order $p$, where $p$ the largest prime divisor of $|G|$.
Moreover, we show that this conjecture holds
for all sporadic simple groups and alternating groups $A_n$, where $n\neq 8, 10$.  Some related conjectures
are also discussed. \\

{\bf Keywords:} Finite simple groups, quantitative characterization, the largest prime divisor.\\

{\bf AMS Mathematics Subject Classification(2010):} 20D05 20D60.
\end{abstract}

\section{Introduction}

All groups considered in this paper are finite and simple groups
always means finite  nonabelian simple groups.

For a finite group $G$, let $\pi(G)$ be the set of prime divisors
dividing the order of $G$ and let $|G|$ and $\pi_e(G)$ denote the
order of $G$ and the set of element orders of $G$, respectively. For
a positive number $k$, denote by $G(k)$ the set of elements of order
$k$ in $G$. It is well known that $|G|$ and $\pi_e(G)$ are two of
the most important quantitative invariants for $G$.  In 1987, the
second author  posed the following conjecture (see \cite{Shi89}).

\begin{conjecture}
Let $G$ be a group and $S$ be a finite  simple group. Then $G\cong
S$ if and only if $\pi_e(G)=\pi_e(S)$ and $|G| = |S|$.
\end{conjecture}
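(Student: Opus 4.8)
The forward implication is immediate: isomorphic groups share the same order and the same set of element orders, so $G\cong S$ at once yields $\pi_e(G)=\pi_e(S)$ and $|G|=|S|$. All of the content lies in the converse, and this is where the plan concentrates. Assume throughout that $\pi_e(G)=\pi_e(S)$ and $|G|=|S|$ with $S$ a finite nonabelian simple group; the goal is to prove $G\cong S$. The governing idea is that the two invariants play complementary roles: the equality $|G|=|S|$ pins down the exact prime factorization of $|G|$, while the equality $\pi_e(G)=\pi_e(S)$ controls the internal structure of $G$ through the arithmetic of its element orders.

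First I would encode the spectrum by the prime graph (Gruenberg--Kegel graph) $\mathrm{GK}(G)$, whose vertices are the primes in $\pi(G)$ and in which $p$ and $q$ are adjacent exactly when $pq\in\pi_e(G)$. Since $\pi_e(G)=\pi_e(S)$ we get $\mathrm{GK}(G)=\mathrm{GK}(S)$, and more generally every statement about products of element orders transfers between $G$ and $S$. A standard first reduction is then to show that $G$ is nonsolvable: the order $|S|$ is divisible by at least three primes (Burnside's $p^aq^b$ theorem), and the Gruenberg--Kegel restrictions on the prime graphs and element orders of solvable groups are incompatible with the shape of $\mathrm{GK}(S)$, so $G$ cannot be solvable.

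Next I would analyze the normal structure of $G$. Let $K$ be the solvable radical of $G$; then $\overline{G}=G/K$ has trivial solvable radical, so its socle is a direct product $P=P_1\times\cdots\times P_t$ of nonabelian simple groups with $\overline{G}$ embedding into $\mathrm{Aut}(P)$. The two constraints now cooperate. Comparing $|G|=|S|$ with $|K|\cdot|P|\cdot|\overline{G}/P|$ forces the prime powers dividing $|S|$ to be absorbed almost entirely by one simple factor, while the spectrum $\pi_e(G)=\pi_e(S)$, read off from elements lying in $K$, in $P$, and in the extension, is used to kill the radical $K$, to force $t=1$, and to identify $P_1$ with $S$ together with its outer part. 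It is precisely here that the classification of finite simple groups enters: one runs through the possible families for $S$ and, in each, invokes detailed facts about the element orders of $S$ and of $\mathrm{Aut}(S)$ to exclude any extra composition factor, any outer automorphism enlarging the spectrum, and any solvable padding consistent with the fixed order.

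The main obstacle is this final elimination, carried out uniformly across the classification. The difficulty is that requiring both $|G|=|S|$ and $\pi_e(G)=\pi_e(S)$ is a tight but not self-evidently contradictory pair of demands: one must rule out every ``mixed'' configuration in which a smaller simple factor is padded by a solvable radical so as to recover the correct order. I expect the decisive leverage to come from the element orders built out of the largest prime divisors of $|S|$---products such as those involving the largest prime $p$ in $\pi(S)$---since their presence or absence in $\pi_e(S)$ is extremely sensitive to additional normal subgroups; making this sensitivity quantitative separately for the alternating, classical, exceptional, and sporadic families is where the bulk of the labor lies, and is the reason the statement is proved family by family rather than by a single uniform argument.
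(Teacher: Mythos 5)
There is a genuine gap, and it is worth being precise about what it is. The statement you are proving is Conjecture 1.1 of the paper, and the paper itself contains no proof of it: it is stated as a conjecture and then reported as having become a theorem through a long sequence of separate papers (the references \cite{Cao, Shi89, Shi90, Shi91, Shi92, Shi94, Xu} covering all simple groups except $B_n$, $C_n$, $D_n$ with $n$ even, and \cite{Vasi} finishing those remaining cases in 2009). Your proposal correctly reproduces the architecture that this literature uses --- trivial forward direction, prime graph $\mathrm{GK}(G)$, analysis of the solvable radical and socle, then a classification-based identification of the unique nonabelian composition factor --- but everything of substance is deferred to the sentence ``one runs through the possible families \dots where the bulk of the labor lies.'' That step is not a routine verification: it is the entire content of roughly two decades of work, it requires detailed knowledge of the spectra of all simple groups and their automorphism groups, and the cases $B_n(q)$, $C_n(q)$, $D_n(q)$ ($n$ even), whose members in the same dimension over the same field share orders and nearly share spectra, resisted every earlier technique and needed genuinely new ideas in \cite{Vasi}. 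A plan that names the hard part but does not execute it is not a proof of this statement.

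There is also a localized technical error in the one reduction you do argue: you claim $G$ is nonsolvable because ``the Gruenberg--Kegel restrictions on the prime graphs \dots of solvable groups are incompatible with the shape of $\mathrm{GK}(S)$.'' The classical Gruenberg--Kegel restriction (a solvable group has at most two prime graph components) only bites when $\mathrm{GK}(S)$ is disconnected, and many nonabelian simple groups --- $A_{10}$, $S_4(3)$, and infinitely many others --- have connected prime graphs. Ruling out solvable $G$ in those cases requires a stronger input (for instance, the CFSG-dependent fact that every nonabelian simple group has three pairwise nonadjacent primes in its prime graph, while solvable groups have no such triple), which your sketch does not supply. So even the first reduction, as written, does not go through uniformly.
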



J.G. Thompson said that \emph{this would certainly be a nice
theorem} (see \cite{Thompson}, personal communication, January 4,
1988) if Shi's conjecture is true. From 1987 to 2003, the authors of
\cite{Cao, Shi89, Shi90, Shi91, Shi92, Shi94, Xu} proved that this
conjecture is correct for all finite simple groups except $B_n$,
$C_n$ and $D_n$ ($n$ even). At the end of 2009, the authors of
\cite{Vasi} proved that this conjecture is correct for $B_n$, $C_n$
and $D_n$ ($n$ even). Thus, this conjecture has been proved and
becomes  a theorem, that is, all finite simple groups can be
determined by their orders and the sets of their element orders
(briefly, `two orders').

Recently, Moret\'{o} in \cite{Mo} took  a new perspective on the
characterization of simple groups and investigated the influence of
the number of elements of order $p$ in  a given group $G$, where
$p\in\pi(G)$. He showed that $A_p$ and $L_2(p)$ are basically
determined just by the number of elements of order $p$, where $p$ is
the largest prime divisor of the order of the group satisfying some
additional conditions. Furthermore, Moret\'{o} posed the following
conjecture.

\begin{conjecture} \label{MC}
Let $S$ be a finite simple group and $p$ the largest prime divisor
of $|S|$. If $G$ is a finite group with the same number of elements
of order $p$ as $S$ and $|G| = |S|$, then $G \cong S$.
\end{conjecture}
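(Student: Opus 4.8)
Since the statement is offered as a conjecture and the abstract already announces counterexamples, I would not try to prove it outright but rather run a two-sided program: verify it for the rigid families while isolating the obstruction. The first move is to make the hypothesis concrete. For the \emph{largest} prime divisor $p$ of $|S|$ the Sylow $p$-subgroup $P$ is, for almost all simple groups, cyclic of order $p$; when $|P|=p$ distinct Sylow subgroups intersect trivially and each carries $p-1$ elements of order $p$, so
\[
|S(p)| = (p-1)\,[S:N_S(P)] = (p-1)\,\frac{|S|}{|N_S(P)|}.
\]
As $|G|=|S|$ with $p$ fixed, the hypothesis $|G(p)|=|S(p)|$ is then equivalent to $|N_G(P)|=|N_S(P)|$. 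So the genuine content of the conjecture is whether the pair $\bigl(|S|,\,|N_S(P)|\bigr)$ determines $S$ among \emph{all} finite groups of that order --- and the fact that $G$ need not be simple already signals that the invariant may be weak.

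The positive direction I would attack through the classification. Knowing $|G|=|S|$ severely restricts the possible composition factors (via the known classification of coincidences of orders of finite simple groups), and the extra datum $|N_G(P)|$ --- recall $N_G(P)/C_G(P)\hookrightarrow\mathrm{Aut}(\mathbb{Z}_p)\cong\mathbb{Z}_{p-1}$, with $N_G(P)$ typically Frobenius --- usually separates the finitely many survivors and rules out non-simple $G$. For the sporadic groups this is a finite check against the \textsc{Atlas}. For $A_n$, Bertrand's postulate gives $p>n/2$, so $P$ is cyclic, the elements of order $p$ are exactly the $p$-cycles, and one counts $|A_n(p)|$ directly and matches it against the order factorisation; this is the route by which the restricted assertions in the abstract should emerge.

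The main obstacle is not any single computation but a structural one: for the top prime the number of order-$p$ elements records only $|N_S(P)|$, a very coarse invariant, so the conjecture must fail whenever two groups of equal order share their Sylow $p$-normalizer order. The paradigm is $A_8$ versus $L_3(4)$: both have order $20160=2^6\cdot3^2\cdot5\cdot7$, in each the Sylow $7$-normalizer has order $21$, so each has $\frac{20160}{21}\cdot 6 = 5760$ elements of order $7$, yet the groups are non-isomorphic. This is exactly why the abstract excludes $A_8$, and I would expect $A_{10}$ together with further classical coincidences (and non-simple $G$ of the right order) to supply the remaining counterexamples. The honest outcome of the program is therefore a refutation in general, a verification for the rigid families, and a precise delineation --- driven by the order-coincidence classification --- of which $S$ survive.
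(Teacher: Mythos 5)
Your treatment is correct and essentially identical to the paper's: the conjecture is false, and your witness $A_8$ versus $L_3(4)$ --- equal orders $2^6\cdot 3^2\cdot 5\cdot 7=20160$, Sylow $7$-normalizers of order $21$, hence $5760$ elements of order $7$ in each --- is precisely the paper's first counterexample, while your reduction of the hypotheses to $|N_G(P)|=|N_S(P)|$ via $|S(p)|=(p-1)[S:N_S(P)]$ is also the paper's opening step in its positive results. The paper's remaining content (a second counterexample $O_7(3)$ versus $S_6(3)$ with $p=13$, the non-simple example $J_2\times Z_3$ arising for $S=A_{10}$, and the verification for sporadic groups and for $A_n$, $n\neq 8,10$, which relies on prime-graph component analysis, Zavarnitsine's tables, and Bi's theorem on orders of Sylow normalizers) corresponds to the program you outline but only gesture at rather than carry out.
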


In this note we first provide two counterexamples for the above
conjecture.

\begin{example}
Let $S = A_8 \cong L_4(2)$ and $G=L_3(4)$. Then we have $|S|=|G|$
and $7$ is the largest prime in $\pi(S)=\pi(G)$. By {\rm
\cite{CCNPW}}, we have that both $S$ and $G$ contain $2^7\cdot
3^2\cdot 5$ elements of order $7$. But $L_3(4)$ is not isomorphic to
$A_8$.
\end{example}

\begin{example}
Let $S = O_7(3)$ and $G = S_6(3)$. We have $|S| = |G| = 2^9\cdot
3^9\cdot 5\cdot 7\cdot 13$ and $p = 13$ is the largest prime
dividing both the orders of $S$ and $G$. Similarly,  we may conclude
from {\rm \cite{CCNPW}} that $G$ and $S$ have the same number of
elements of order $\mathrm{13}$.
\end{example}


Although  Moret\'{o}'s conjecture is not true in general, we prove
that this conjecture holds for all sporadic simple groups and almost
all alternating groups. Our  main result is  as follows.

\begin{theorem} \label{Main}
Let $G$ be a group, $S$ be any sporadic simple group and $p$ be the
largest prime in $\pi(S)$. If $|G|=|S|$ and $|G(p)|=|S(p)|$, then
$G\cong S$.
\end{theorem}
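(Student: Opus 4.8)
The plan is to prove that a sporadic simple group $S$ is characterized among all finite groups by the pair $(|G|, |G(p)|)$, where $p$ is the largest prime in $\pi(S)$. Since there are only $26$ sporadic groups, the natural strategy is a group-by-group verification driven by the ATLAS \cite{CCNPW}, but organized so that most cases fall under a uniform argument. First I would tabulate, for each sporadic $S$, the order $|S|$, the largest prime $p$, and the count $|S(p)|$. For the overwhelming majority of sporadic groups, $p$ divides $|S|$ to the first power only, so a Sylow $p$-subgroup $P$ of $S$ is cyclic of order $p$ and every element of order $p$ lies in a conjugate of $P$. In that situation $|S(p)| = (p-1)\, n_p(S)$, where $n_p(S)$ is the number of Sylow $p$-subgroups, and by Sylow's theorem $n_p(S) = |S \colon N_S(P)|$; knowing $|S(p)|$ thus pins down $|N_S(P)|$ exactly.

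The crux of the argument is then to show that any group $G$ with $|G| = |S|$ and $|G(p)| = |S(p)|$ must in fact be simple and isomorphic to $S$. The natural route is to reduce to Shi's theorem (now proven), which says $S$ is determined by $|S|$ together with $\pi_e(S)$: if I can show that the hypotheses force $\pi_e(G) = \pi_e(S)$, I am done. To get there I would argue that $G$ cannot have a nontrivial soluble normal subgroup and cannot be a nonsplit extension producing extra $p$-element orders. The key lever is a counting/arithmetic constraint: since $|G| = |S|$ has $p$-part exactly $p^{a}$ (usually $a = 1$), Sylow's theorem gives $n_p(G) \equiv 1 \pmod p$ and $n_p(G) \mid |G|/p^a$, and the equation $|G(p)| = |S(p)|$ forces $n_p(G) = n_p(S)$, hence $|N_G(P)|$ is determined. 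Combined with Frobenius's theorem on the number of solutions of $x^p = 1$ (which must be divisible by $p^a$) and the fact that $|S(p)|+1$ is the relevant count, this rigidly constrains the local structure at $p$ and rules out most non-simple candidates for $G$ on order-arithmetic grounds alone.

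The main obstacle I expect is twofold. First, a handful of sporadic groups have $p^2 \mid |S|$ for the largest prime $p$ (one must check the ATLAS for exceptions such as groups where the top prime appears with multiplicity greater than one), and there the clean formula $|S(p)| = (p-1)n_p(S)$ fails; for those I would have to analyze the Sylow $p$-subgroup structure directly and count elements of order $p$ within it. Second, and more seriously, ruling out the possibility that $G$ is a proper (possibly non-simple) group of the same order with the same $p$-element count requires care: I would need to invoke the classification of finite simple groups to enumerate which simple groups share the order $|S|$ with a sporadic group, then check via \cite{CCNPW} that none of those competitors matches $|S(p)|$, and separately eliminate non-simple $G$ by showing any normal subgroup would distort either the order or the count of $p$-elements. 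I anticipate that the arithmetic of $|S|$ for sporadic groups is rigid enough — their orders are famously ``isolated'' — that the only finite groups of a given sporadic order are the sporadic group itself, so that the characterization by two orders $(|G|, \pi_e(G))$ collapses to the characterization by $(|G|, |G(p)|)$; verifying this rigidity, case by case against the full list of simple-group orders, is where the real work lies.
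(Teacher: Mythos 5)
Your opening step coincides with the paper's: for every one of the $26$ sporadic groups the largest prime $p$ divides $|S|$ exactly once (so the exceptional case $p^2\mid |S|$ you set aside never actually occurs), a Sylow $p$-subgroup $P$ is cyclic of order $p$, and $|G:N_G(P)|=|G(p)|/(p-1)$ determines $|N_G(P)|$. Beyond that point, however, there is a genuine gap. Your plan is to force $\pi_e(G)=\pi_e(S)$ and then quote Shi's theorem, but nothing in your outline produces any control over $\pi_e(G)$: Sylow's theorem, Frobenius's theorem on solutions of $x^p=1$, and the arithmetic of $|G|$ constrain only the local structure at $p$, and they cannot by themselves exclude a group $G$ with a large soluble radical, or a group whose unique nonabelian composition factor is a different simple group with spectrum inside $\pi(S)$; determining $\pi_e(G)$ for an unknown group of known order is essentially as hard as determining $G$ itself. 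Moreover, your rigidity claim that ``the only finite groups of a given sporadic order are the sporadic group itself'' is false as stated (every order admits many soluble groups); what is true is uniqueness among \emph{simple} groups of that order, and that only helps after simplicity of $G$ has been established --- which is exactly the step your proposal leaves unproved.

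The paper closes this gap with specific structural machinery absent from your outline. For each $S$ it argues in one of two ways: either (i) it shows $C_G(P)=P$ (via Burnside's normal $p$-complement theorem and a Hall-subgroup contradiction), so that $\{p\}$ is a connected component of the prime graph $\Gamma(G)$, and then invokes the Gruenberg--Kegel/Williams structure theorem (Lemma \ref{non-con}): $G$ is Frobenius, $2$-Frobenius, or has a normal series $1\subseteq H\subseteq K\subseteq G$ with $K/H$ nonabelian simple and $|G/K|$ dividing $|\mathrm{Out}(K/H)|$; the Frobenius and $2$-Frobenius possibilities are killed by Lemmas \ref{Fro} and \ref{2Fro}; or (ii) it takes the largest soluble normal subgroup $K$ of $G$ and produces a minimal normal (hence simple) subgroup $N$ of $G/K$ with $p\in\pi(N)$. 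In both routes the decisive tool is Zavarnitsine's CFSG-based table \cite{Zava} of finite simple groups with prime spectrum contained in $\pi(S)$, which yields a short explicit list of candidates for the simple section; each wrong candidate is then eliminated using the known order of $N_G(P)$ (typically because it would force an extra prime into $\pi(N_G(P))$), leaving only $S$ and hence $G\cong S$ by order comparison. This combination --- disconnected prime graph plus Williams/Chen structure theory plus Zavarnitsine's table --- is what your ``order-arithmetic rigidity'' would have to be replaced by; without it the proof does not go through.
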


We also conclude  that this conjecture is valid for alternating
groups $A_n$ except $n=8, 10$ by the following result of Bi in
\cite{Bi}. For a group $G$, Bi proved that if $|G|=|A_n|$ and
$N_G(R)$ and $N_{A_n}(S)$ have the same order, where $R$ and $S$ are
the Sylow $p$-subgroups of $G$ and $A_n$ respectively with $p$ the
largest prime not exceeding $n$ and $n\neq 8, 10$, then $G\cong A_n$
(see \cite[Theorem 1.2]{Bi}).

\begin{theorem} \label{alt}
Suppose that a group $G$ fulfills the conditions of Conjecture
\ref{MC} with $S=A_n$ for $n\geq 5$. Then

{\rm (1)} If $n\neq 8, 10$, then $G\cong S$.

{\rm (2)} If $S=A_8$, then $G\cong A_8$ or $L_3(4)$.

{\rm (3)} If $S=A_{10}$, then $G\cong A_{10}$ or $G\cong J_2\times
Z(G)$, where $Z(G)$ is a cyclic group of order $3$.
\end{theorem}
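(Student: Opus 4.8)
The plan is to reduce the hypothesis on the number of elements of order $p$ to the Sylow-normalizer hypothesis of the result of Bi quoted above, and then to invoke that result for $n\neq 8,10$ while treating the two exceptional values directly.

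First I would record the arithmetic of $p$. Writing $p$ for the largest prime not exceeding $n$, Bertrand's postulate gives $p>n/2$, hence $n<2p$, so the $p$-part of $|A_n|=n!/2$ equals $p$. Consequently a Sylow $p$-subgroup $S$ of $A_n$ is cyclic of order $p$, generated by a $p$-cycle, and since $|G|=|A_n|$ a Sylow $p$-subgroup $R$ of $G$ is likewise cyclic of order $p$. In either group every element of order $p$ generates a Sylow $p$-subgroup, distinct Sylow $p$-subgroups meet trivially, and each contains exactly $p-1$ generators; therefore $|A_n(p)|$ equals $(p-1)$ times the number of Sylow $p$-subgroups of $A_n$, that is $(p-1)\,|A_n|/|N_{A_n}(S)|$ by Sylow's theorem, and similarly $|G(p)|=(p-1)\,|G|/|N_G(R)|$. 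The hypotheses $|G|=|A_n|$ and $|G(p)|=|A_n(p)|$, together with $p-1\neq 0$, then force $|N_G(R)|=|N_{A_n}(S)|$. With this identity in hand, part (1) is immediate: for $n\neq 8,10$ the theorem of Bi (\cite[Theorem 1.2]{Bi}) applies verbatim and yields $G\cong A_n$.

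For parts (2) and (3) Bi's theorem is unavailable, so I would analyze $n=8$ and $n=10$ by hand; here $p=7$ in both cases. A direct count of $7$-cycles gives $|A_8(7)|=2^7\cdot 3^2\cdot 5$ and $|A_{10}(7)|=2^7\cdot 3^3\cdot 5^2$, whence $|N_{A_8}(S)|=21$ and $|N_{A_{10}}(S)|=126$, and by the reduction above any admissible $G$ has $|G|=|A_n|$ with the same Sylow $7$-normalizer order. The task is thus to classify all groups of order $2^6\cdot 3^2\cdot 5\cdot 7$ (respectively $2^7\cdot 3^4\cdot 5^2\cdot 7$) whose Sylow $7$-normalizer has order $21$ (respectively $126$). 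I would first pin down the structure of $G$ by analyzing the action of $N_G(R)$ on $R$, determining the possible composition factors via the classification of finite simple groups, and locating a normal subgroup that controls the structure. For $n=8$ one shows $G$ is simple and then uses the classical fact that $20160$ is the order of exactly two nonisomorphic simple groups, namely $A_8\cong L_4(2)$ and $L_3(4)$, both of which realize the required $7$-element count. For $n=10$ one shows that either $G$ is simple, forcing $G\cong A_{10}$, or $G$ has a central subgroup of order $3$ with quotient $J_2$, giving $G\cong J_2\times Z(G)$ with $Z(G)$ a cyclic group of order $3$; in each surviving case one checks that the normalizer order is attained.

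The main obstacle is precisely this exceptional classification. The difficulty is twofold: the hypotheses control only $|G|$ and a single Sylow normalizer, so one must work to exclude the many non-simple configurations—for instance a group with a normal $7$-complement of the correct order produces exactly the same number of elements of order $7$—and the coincidence that $A_8$ and $L_3(4)$ share the order $20160$ is itself what makes Bi's theorem fail at $n=8$. Overcoming this requires a CFSG-based determination of the composition factors of $G$ from the order and normalizer data, rather than any elementary counting argument.
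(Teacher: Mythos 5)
Your part (1) is complete and is exactly the paper's argument: since the $p$-part of $|A_n|$ is $p$, every element of order $p$ lies in a unique Sylow $p$-subgroup, so $|G(p)|=(p-1)\,|G:N_G(P)|$, and the two hypotheses convert into equality of Sylow $p$-normalizer orders, at which point Bi's theorem applies for $n\neq 8,10$.

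For parts (2) and (3), however, there is a genuine gap: the whole substance of the theorem in these cases is the step you defer with ``one shows $G$ is simple'' (for $n=8$) and ``one shows that either $G$ is simple or $G$ has a central subgroup of order $3$ with quotient $J_2$'' (for $n=10$). You correctly identify this as the main obstacle, but you give no argument for it, and it is precisely here that the paper does its work. For $A_8$ the paper argues: since $|N_G(P)|=21$, either $N_G(P)=C_G(P)$, in which case Burnside's transfer theorem yields a normal $7$-complement $K$ and a $P$-invariant Sylow $5$-subgroup of $K$ would lie in $C_G(P)$ (a group of order $35$ is cyclic), a contradiction; hence $C_G(P)=P$, so $\{7\}$ is a component of the prime graph $\Gamma(G)$. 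Then Lemma \ref{non-con} (with Lemmas \ref{Fro} and \ref{2Fro} eliminating the Frobenius and $2$-Frobenius possibilities) gives a normal series $1\subseteq H\subseteq K\subseteq G$ with $K/H$ simple, and Zavarnitsine's table \cite{Zava} restricts $K/H$ to $A_8$, $L_3(4)$, $A_7$, $L_2(8)$, $L_2(7)$; the last three are excluded because they force a nontrivial $2$-subgroup or $5$-subgroup of $H$ into $C_G(P)$, contrary to $|N_G(P)|=21$. For $A_{10}$ the paper instead takes $K$ to be the largest solvable normal subgroup, shows $G/K$ has a unique minimal normal subgroup $N$ with $7\in\pi(N)$ (uniqueness via $N_{G/K}(PK/K)=N_G(P)K/K$ and $|N_G(P)|=126$), runs through Zavarnitsine's list, kills all candidates except $J_2$ and $A_{10}$ because they force $5\mid |K|$ and hence $5\mid |N_G(P)|$, and in the $J_2$ case still must show $|K|=3$ is central and invoke the fact that the Schur multiplier of $J_2$ is $Z_2$ to split the extension as $J_2\times Z_3$. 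None of this machinery---the transfer/prime-graph step, the Gruenberg--Kegel-type structure lemma, the narrow-spectrum table, the Schur multiplier argument---appears in your sketch, and your appeal to the ``classical fact'' about the two simple groups of order $20160$ only becomes usable after simplicity (or the precise non-simple alternative) has been established. As written, parts (2) and (3) are statements of intent rather than proofs.
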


Note that in Conjecture \ref{MC}, for a given simple $S$, the
largest prime $p$ in $\pi(S)$ and the number of elements of order
$p$ in $S$ play an important role in recognition of $S$. On the
other hand, almost 40 years ago, Herzog in \cite{Herzog}
investigated the influence of the number  of involutions on the
structure of simple groups and proved that many classes of simple
groups are characterized by the numbers of  their involutions.
Furthermore, Herzog conjectured in \cite{Herzog} that if two simple
groups have the same number of involutions, then they have the same
order. However, this conjecture is not true in general and in
\cite{Zarrin}, Zarrin provided a counterexample as follows. Let
$G=L_3(4)$ and $S=S_4(3)$. Then, with notation as above, we have
$|G(2)|=|S(2)|=315$, i.e, both $L_3(4)$ and $S_4(3)$ have 315
elements of order $2$. Zarrin also in \cite{Zarrin} put forward the
following conjecture: If $S$ is a nonabelian simple groups and $G$
is  a groups such that $|G(2)|=|S(2)|$  and $|G(p)|=|S(p)|$ for some
odd prime divisor $p$, then $|G|=|S|$. Later on, Zarrin's conjecture
and its related topics were discussed in Anabanti \cite{Anab} and
Malinowska \cite{Malin} respectively. In \cite{Anab}, Anabanti
disproved Zarrin's conjecture by the following counterexample. Let
$G=L_4(3)$ and $S=L_3(9)$. Then $|G(2)|=|S(2)|$ and
$|G(13)|=|S(13)|$, but it is clear that $|G|\neq |S|$.

Now it is natural to ask whether any  nonabelian simple group can be
determined by the conditions in Moret\'{o}'s conjecture together
with the number of involutions. However, this does not hold in
general. For example, take $G=L_3(4)$ and $S=L_4(2)$, which are not
isomorphic. Then $|G|=|S|$, $|G(2)|=|S(2)|$ and $|G(7)|=|S(7)|$,
where $7$ is the largest prime divisor dividing the orders of $G$
and $S$. We conclude this section with the following conjecture,
which strengthens the conditions of Moret\'{o}'s conjecture.

\begin{conjecture}
Let $G$ be a group and $S$ be a finite simple group. Then $G\cong S$
if and only if $\mathrm{npe}(G)=\mathrm{npe}(S)$ and $|G|=|S|$.
\end{conjecture}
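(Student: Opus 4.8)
The forward implication is immediate, since isomorphic groups share the same order and the same profile of prime-order element counts; all the content lies in the converse. Here $\mathrm{npe}(G)$ records the numbers $|G(p)|$ of elements of each prime order $p$, so the hypotheses $\mathrm{npe}(G)=\mathrm{npe}(S)$ and $|G|=|S|$ give $\pi(G)=\pi(S)$ together with $|G(p)|=|S(p)|$ for every $p\in\pi(S)$. The overarching plan is to reduce to the now-established Shi characterization stated in the introduction (two orders determine a simple group): it would suffice to prove that these hypotheses force $\pi_e(G)=\pi_e(S)$, whence $G\cong S$ follows at once. Thus the real task is to \emph{promote} the prime-order data carried by $\mathrm{npe}$ to the full element-order spectrum $\pi_e$.

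The first step I would carry out is to extract Sylow and normalizer information from $\mathrm{npe}$. For each prime $p$ one has $|G(p)|=(p-1)\,m_p(G)$, where $m_p(G)$ is the number of subgroups of order $p$; hence the hypothesis pins down $m_p(G)=m_p(S)$ for all $p$. Whenever $p$ exactly divides $|S|$ (so that the Sylow $p$-subgroup is cyclic of order $p$), this number equals the number $n_p$ of Sylow $p$-subgroups, and therefore $|N_G(P)|=|G|/m_p(G)=|S|/m_p(S)=|N_S(P)|$ is recovered. This is precisely the Sylow-normalizer datum exploited in Bi's theorem, which already yields Theorem \ref{alt}, so I would attack the conjecture family by family along the classification of finite simple groups: in each case one recovers from $\mathrm{npe}$ enough normalizer data to invoke, or to adapt, an existing characterization by order together with Sylow-normalizer orders. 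The sporadic and alternating cases are then subsumed by Theorems \ref{Main} and \ref{alt}, and the counts $m_p(G)$ can be further constrained by Frobenius' theorem, which forces the number of solutions of $x^{m}=1$ in $G$ to be a multiple of $\gcd(m,|G|)$ for each $m$.

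The hard part will be bridging from the prime-order layer to the composite element orders. By construction $\mathrm{npe}$ sees only elements of prime order, so the presence or absence in $\pi_e(G)$ of prime powers $p^{2},p^{3},\dots$ and of mixed products $pq$ is not directly controlled; yet these are exactly the orders that distinguish nonisomorphic groups of equal order. The counterexamples $A_8\cong L_4(2)$ versus $L_3(4)$ and $O_7(3)$ versus $S_6(3)$ to Moret\'o's weaker conjecture show that a single prime never suffices and that any separation must come from the full prime-order profile, so I expect the decisive comparisons to pit, for each prime $p$, the $p$-local structure of $G$ forced by $m_p(G)$ against that of $S$. A uniform argument seems out of reach without the classification; the realistic route is a case analysis over the groups of Lie type, which I anticipate to be the principal obstacle, with the small-rank and small-characteristic families—where order coincidences among distinct simple groups are concentrated—demanding the most delicate treatment.
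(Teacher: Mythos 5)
This statement is not a theorem of the paper at all: it is the authors' closing \emph{conjecture}, posed as an open problem strengthening Moret\'o's conjecture, and the paper contains no proof of it. So there is no paper proof to compare against, and your text should be judged as a standalone proof attempt. As such, it is not a proof but a research plan, and you concede the decisive point yourself: the entire content --- promoting the prime-order counts $\mathrm{npe}(G)$ to the full spectrum $\pi_e(G)$, or alternatively carrying out a case analysis over the groups of Lie type --- is declared ``the principal obstacle'' and left open. Nothing is actually proved beyond the trivial forward implication.

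Two further gaps are concrete and worth fixing even in an outline. First, $\mathrm{npe}(G)$ is defined in the paper as a \emph{set} $\{|G(p)|\mid p\in\pi(G)\}$, so the hypothesis $\mathrm{npe}(G)=\mathrm{npe}(S)$ does not by itself pair off primes and counts; your opening claim that it yields $\pi(G)=\pi(S)$ and $|G(p)|=|S(p)|$ for every $p$ needs an argument (Frobenius' theorem gives $|G(p)|\equiv -1 \pmod p$, which helps, but you must say how ambiguities are resolved, and in principle distinct primes could share a count). Second, your assertion that the alternating and sporadic cases are ``subsumed by Theorems \ref{Main} and \ref{alt}'' is wrong for exactly the two exceptional cases: Theorem \ref{alt} only concludes $G\cong A_8$ or $L_3(4)$ (respectively $G\cong A_{10}$ or $J_2\times Z_3$), since Moret\'o's hypothesis genuinely fails to separate these pairs. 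To finish under the stronger $\mathrm{npe}$ hypothesis you must exhibit a prime whose element count differs --- e.g.\ $A_8$ has $1232$ elements of order $3$ while $L_3(4)$ has $2240$ --- and you never perform this check. Until the Lie-type analysis is supplied and these two points are repaired, the proposal establishes nothing beyond what the paper already states.
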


Here we let $\mathrm{npe}(G)=\{|G(p)|\mid p\in \pi(G)\}$ denote the
set of numbers of elements of prime order  in a group $G$.

\section{Preliminary}

In this section, we collect some elementary facts which are useful
in our proof.

For a group $G$, define its prime graph $\Gamma(G)$ as follows: the
vertices are the primes dividing the order of $G$, two vertices $p$
and $q$ are joined by an edge if and only if $G$ contains an element
of order $pq$ (see \cite{Wil}). Denote the connected components of
the prime graph by $T(G)=\{\pi_i(G)|1\leqslant i\leqslant t(G)\}$,
where $t(G)$ is the number of the prime graph components of $G$. If
the order of $G$ is even, assume that the prime 2 is always
contained in $\pi_1(G)$. A simple group whose order has exactly $n$
distinct primes is called a simple $K_n$-group. In addition, for a
group $G$, we call $G$ a $2$-Frobenius group if $G$ has a normal
series $1\subseteq H\subseteq K\subseteq G$ such that $K$ and $G/H$
are Frobenius groups with kernels $H$ and $K/H$ respectively. For
other notation and terminologies mentioned in this paper, the reader
is referred to  \cite{CCNPW} if necessary.

\begin{lemma} \label{non-con}
Let $G$ be a group with more than one prime graph component. Then
$G$ is one of the following:

{\rm (i)} a Frobenius or $2$-Frobenius group;

{\rm (ii)} $G$ has a normal series $1\subseteq H\subseteq K\subseteq
G$, where $H$ is a nilpotent $\pi_1$-group, $K/H$ is a nonabelian
simple group and $G/K$ is a $\pi_1$-group such that $|G/K|$ divides
the order of the outer automorphism group of $K/H$. Besides, for
$i\geq 2$, $\pi_i(G)$ is also a component of $\Gamma(K/H)$.
\end{lemma}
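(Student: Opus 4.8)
The plan is to prove this as the Gruenberg--Kegel structure theorem, working directly from the definition of $\Gamma(G)$ and without appealing to the classification of finite simple groups. Everything rests on one elementary but decisive observation, which I would record first: for $i\geq 2$ and any nontrivial $\pi_i$-element $x$, the centralizer $C_G(x)$ is a $\pi_i$-group. Indeed, a prime $r\notin\pi_i$ dividing $|C_G(x)|$ would yield an element $y\in C_G(x)$ of order $r$ commuting with $x$, hence an element $xy$ of order divisible both by a prime of $\pi_i$ and by $r\notin\pi_i$; this edge contradicts $\pi_i$ being a separate component. I would also note the consequence that, for $p\in\pi_i$ with $i\geq 2$, a Sylow $p$-subgroup has its centralizer inside a single component, so that the $\pi_i$-elements sit in a Frobenius-like, isolated configuration.

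I would then treat the solvable case, proving that a solvable $G$ with $t(G)\geq2$ is Frobenius or $2$-Frobenius (and that $t(G)=2$). The natural tool is the Fitting subgroup $F=F(G)$: solvability gives $C_G(F)\leq F$, so $G$ acts faithfully on the product of its minimal normal subgroups. Choosing a minimal normal subgroup $V$, an elementary abelian $r$-group with $r$ in some component, the isolated-centralizer property forces a Hall subgroup associated with the complementary primes to act on $V$ without nontrivial fixed points; identifying this fixed-point-free action exhibits a Frobenius kernel and complement, and peeling off one more layer produces the $2$-Frobenius alternative. The bookkeeping that shows only two components can occur, and that matches kernel and complement to the components, is the fussy part of this step, but it is standard coprime-action material.

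For the non-solvable case I would set $H$ equal to the solvable radical of $G$ and pass to $\overline G=G/H$. A short argument from the isolated-centralizer property shows $H$ is a $\pi_1$-group, and a Fitting-subgroup analysis then shows $H$ is nilpotent, supplying the bottom term of the series. Because $\overline G$ has trivial solvable radical, $F(\overline G)=1$, so $F^*(\overline G)=E(\overline G)=\mathrm{Soc}(\overline G)=S_1\times\cdots\times S_m$ is a direct product of nonabelian simple groups with $C_{\overline G}(E(\overline G))=1$, whence $\overline G$ embeds in $\mathrm{Aut}(E(\overline G))$. The decisive point is $m=1$. To force it I would feed the isolated-centralizer property into the action of a higher-component element: a prime-order element $g$ lying over the outer part either fixes a factor and centralizes a nontrivial element of it---producing a forbidden edge between a $\pi_i$-prime ($i\geq2$) and a $\pi_1$-prime---or acts without fixed points, which by Thompson's theorem on fixed-point-free automorphisms of prime order would force a simple factor to be nilpotent, or permutes the factors and leaves a nontrivial diagonal centralizer, again yielding a forbidden edge. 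Coupled with the fact that involutions in distinct factors commute, so that $m\geq2$ would drag every socle prime into $\pi_1$, this pins the socle down to a single simple group $\overline K=K/H$. Then $G/K$ embeds in $\mathrm{Out}(K/H)$ and is a $\pi_1$-group, and since $H$ and $G/K$ contribute only $\pi_1$-primes, every component $\pi_i(G)$ with $i\geq2$, edges and all, is inherited by $\Gamma(K/H)$.

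The main obstacle is exactly this non-solvable reduction: extracting ``$E(\overline G)$ is a single simple group'' from ``$E(\overline G)$ is a product of simple groups,'' and simultaneously certifying that the solvable radical $H$ is nilpotent and purely a $\pi_1$-group. These are the places where the isolated-centralizer property must be applied most aggressively, since a single commuting pair across two factors, or one higher-component prime concealed in $H$ or in the outer quotient, instantly fabricates a cross-component element order. The solvable case, though the longest to write out, is conceptually routine once the fixed-point-free action has been isolated, and I would invoke standard facts about Hall subgroups and coprime action rather than reprove them. It is worth stressing that the whole argument---including Thompson's input on fixed-point-free automorphisms of prime order, which predates the classification---avoids the classification of finite simple groups, so the lemma stands as a purely structural result.
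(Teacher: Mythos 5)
The paper does not actually prove this lemma---it is quoted wholesale from Williams \cite{Wil} and Chen \cite{Ch2, Ch4}---so you are attempting to reprove the Gruenberg--Kegel theorem from scratch. That is a legitimate (and classification-free) goal, but your proof architecture has a genuine flaw: you split into ``$G$ solvable $\Rightarrow$ alternative (i)'' and ``$G$ nonsolvable $\Rightarrow$ alternative (ii)'', and this dichotomy is simply not the one in the theorem. Nonsolvable Frobenius groups with disconnected prime graph exist, and they break your nonsolvable branch. Concretely, take $G = 11^2 \rtimes \mathrm{SL}(2,5)$, where $\mathrm{SL}(2,5) \leq \mathrm{SL}(2,11)$ acts fixed-point-freely on the natural module (no element of $\mathrm{SL}(2,5)$ is unipotent in $\mathrm{SL}(2,11)$, so no nontrivial element has eigenvalue $1$). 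This $G$ is Frobenius and nonsolvable, with $t(G)=2$, $\pi_1 = \{2,3,5\}$ and $\pi_2 = \{11\}$. Its solvable radical is $R = 11^2 \rtimes \langle -I \rangle$, the preimage of $Z(\mathrm{SL}(2,5))$, since $G/R \cong A_5$. This $R$ contains the $\pi_2$-prime $11$, so it is not a $\pi_1$-group; it has trivial center, so it is not nilpotent; and it is not even a $\pi_i$-group for a single component. Hence the ``short argument from the isolated-centralizer property'' showing $H$ is a $\pi_1$-group cannot exist, and the subsequent nilpotency claim fails too: for this $G$ the correct output is alternative (i), which your nonsolvable branch never produces.

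The repair is to make the case division the one Williams actually uses: first analyze whether some prime of $\pi_i$, $i \geq 2$, divides the order of a solvable normal subgroup (equivalently, of the solvable radical). When it does, one shows $G$ is Frobenius or $2$-Frobenius \emph{regardless of whether $G$ is solvable}---this is where the fixed-point-free/Frobenius-complement analysis belongs; only when it does not can one run your socle argument on $G/H$. Be aware also that even in that second branch, ``$H$ is a $\pi_1$-group'' is the core technical content of the theorem (the substance of Williams' Lemmas 1--3), not a one-line consequence of isolated centralizers. The remaining ingredients of your sketch are sound in outline: Thompson's theorem does force normal $\pi_1$-subgroups to be nilpotent (a $\pi_i$-element of prime order acts coprimely with trivial fixed points), commuting elements across distinct socle factors do force a single simple factor, and edges do lift from $\Gamma(G/H)$ to $\Gamma(G)$; but as structured, the proof fails at the case split.
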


\begin{proof}
It follows straight forward from  Lemmas 1-3 in \cite{Wil}, Lemma
1.5 in \cite{Ch2} and Lemma 7 in \cite{Ch4}.
\end{proof}

\begin{lemma} \label{Fro}
Suppose that $G$ is a Frobenius group of even order and $H$, $K$ are
the Frobenius kernel and the Frobenius complement of $G$,
respectively. Then $t(G)=2$, $T(G)=\{\pi(H), \pi(K)\}$ and $G$ has
one of the following structures:

{\rm (i)} $2\in \pi(H)$ and all Sylow subgroups of $K$ are cyclic;

{\rm (ii)} $2\in \pi(K)$, $H$ is an abelian group, $K$ is a solvable
group, the Sylow subgroups of $K$ of odd order are cyclic groups and
the Sylow $2$-subgroups of $K$ are cyclic or generalized quaternion
groups.
\end{lemma}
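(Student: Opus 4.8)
The plan is to combine elementary Frobenius-group theory with Williams' description of disconnected prime graphs (the source already invoked for Lemma~\ref{non-con}), and then to read off the finer structure from the classification of Frobenius complements. Write $G=HK$ with normal kernel $H$, complement $K$, $H\cap K=1$ and $(|H|,|K|)=1$; by Thompson's theorem $H$ is nilpotent. In a Frobenius group every element lies either in $H$ or in a conjugate of $K$, so the order of each element of $G$ divides $|H|$ or $|K|$. In particular $G$ has no element of order $pq$ with $p\in\pi(H)$ and $q\in\pi(K)$, whence no edge of $\Gamma(G)$ joins $\pi(H)$ to $\pi(K)$. Since $H$ is nilpotent it is the direct product of its Sylow subgroups, so any two primes of $\pi(H)$ are adjacent and $\pi(H)$ lies in a single component. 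Together with Williams' result this gives $t(G)=2$ and $T(G)=\{\pi(H),\pi(K)\}$.

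For the structure I would use the classical facts that every Sylow subgroup of a Frobenius complement is cyclic or generalized quaternion and that such a complement has at most one involution, which is then central. Coprimality forces $2$ into exactly one of $\pi(H)$, $\pi(K)$, and I would split accordingly. If $2\in\pi(H)$, then $|K|$ is odd, so $K$ has trivial Sylow $2$-subgroup and no odd Sylow subgroup can be generalized quaternion; hence every Sylow subgroup of $K$ is cyclic, which is (i). If $2\in\pi(K)$, let $z$ be the unique involution of $K$. As a nontrivial element of the complement $z$ acts fixed-point-freely on $H$, i.e. as a fixed-point-free automorphism of order $2$; by the standard lemma such an automorphism inverts every element, so $H$ is abelian. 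The odd-order Sylow subgroups of $K$ are cyclic and the Sylow $2$-subgroup is cyclic or generalized quaternion, giving the Sylow part of (ii). Moreover $z$ is central, so for every odd prime $q\in\pi(K)$ and every $y$ of order $q$ the element $zy$ has order $2q$; thus $2$ is joined to every prime of $\pi(K)$, reconfirming that $\pi(K)$ is a single component.

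The main obstacle is the solvability of $K$ in case (ii). Everything else above is essentially elementary---Thompson's nilpotency theorem, the division of element orders, and the fixed-point-free inversion that makes $H$ abelian---but solvability cannot be reached by such soft arguments: a priori a Frobenius complement of even order need not be solvable, with $SL(2,5)$ as its essential nonsolvable constituent. Excluding this possibility (or otherwise pinning down the claimed structure) is exactly the point at which one must appeal to the full classification of Frobenius complements due to Zassenhaus, and this is the nonelementary heart of the statement.
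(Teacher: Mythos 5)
The paper offers no actual proof of this lemma: its entire ``proof'' is the citation ``This is Lemma 1.6 in \cite{Ch3}.'' So the only meaningful comparison is between your argument and the cited result itself. The parts of your argument that you carried out are correct and standard: element orders in a Frobenius group divide $|H|$ or $|K|$, so no edge joins $\pi(H)$ to $\pi(K)$; nilpotence of the kernel (Thompson) makes $\pi(H)$ a clique; in case (i) an odd-order complement has no generalized quaternion Sylow subgroups, so all its Sylow subgroups are cyclic; and in case (ii) the unique involution of $K$ is central and acts fixed-point-freely on $H$, hence inverts $H$ and forces $H$ abelian, while Burnside's theorem gives the cyclic/generalized-quaternion Sylow structure.

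The obstacle you flagged at the end, however, is not a gap that Zassenhaus's classification can close --- it is a counterexample to the statement as this paper transcribes it. $SL(2,5)$ genuinely occurs as a Frobenius complement: $SL(2,5)\leq SL(2,11)$, and any nontrivial element of $SL(2,11)$ fixing a nonzero vector of $\mathbb{F}_{11}^{2}$ has both eigenvalues equal to $1$ and hence has order $11$; since $SL(2,5)$ has no such elements, $G=\mathbb{F}_{11}^{2}\rtimes SL(2,5)$ is a Frobenius group of even order with abelian kernel and \emph{nonsolvable} complement. This $G$ satisfies neither (i) (as $2\notin\pi(H)=\{11\}$) nor (ii) (as $K$ is not solvable). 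What the classification actually yields is a third case, and Chen's Lemma 1.6 in \cite{Ch3} --- the result being cited --- includes it: $2\in\pi(K)$, $H$ abelian, and $K$ contains a subgroup $K_0$ with $|K:K_0|\leq 2$ and $K_0\cong Z\times SL(2,5)$, where $(|Z|,30)=1$ and the Sylow subgroups of $Z$ are cyclic. The present paper's statement silently drops this case, so the lemma as stated is false and admits no proof; your argument is correct exactly up to the point where the statement stops being true. (The omission is harmless for the applications in Section 3, which use only $t(G)=2$, $T(G)=\{\pi(H),\pi(K)\}$, nilpotence of the kernel, and the divisibility $|K|\mid |H|-1$, none of which involve solvability of the complement.)
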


\begin{proof}
This is Lemma 1.6 in \cite{Ch3}.
\end{proof}

\begin{lemma} \label{2Fro}
 Let $G$ be a $2$-Frobenius group
of even order. Then $t(G)=2$ and $G$ has a normal series $1\subseteq
H\subseteq K\subseteq G$ such that $\pi(K/H)=\pi_2$, $\pi(H)\cup
\pi(G/K)=\pi_1$, the order of $G/K$ divides the order of the
automorphism group of $K/H$, and both $G/K$ and $K/H$ are cyclic.
Especially, $|G/K|<|K/H|$ and $G$ is soluble.
\end{lemma}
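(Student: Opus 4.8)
The plan is to read the prime graph of $G$ off the two Frobenius structures it carries. Write the defining series as $1\subseteq H\subseteq K\subseteq G$, so that $K=H\rtimes(K/H)$ is Frobenius with kernel $H$ and complement $K/H$, while $G/H=(K/H)\rtimes(G/K)$ is Frobenius with kernel $K/H$ and complement $G/K$. In particular $H$, $K/H$, $G/K$ are all nontrivial, and coprimality of Frobenius kernels and complements gives $\pi(H)\cap\pi(K/H)=\pi(K/H)\cap\pi(G/K)=\varnothing$, so $\pi(K/H)$ is disjoint from both $\pi(H)$ and $\pi(G/K)$. Set $\pi_2:=\pi(K/H)$ and $\pi_1:=\pi(H)\cup\pi(G/K)$; the aim is to prove these are the two prime graph components. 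I would use throughout the standard facts that a Frobenius kernel is nilpotent, that a Frobenius complement acts faithfully and fixed-point-freely on its kernel with cyclic or generalized quaternion Sylow subgroups (cf. Lemma \ref{Fro}), and that in a Frobenius group the centralizer of a nonidentity kernel element lies inside the kernel.

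The first task is to show $K/H$ and $G/K$ are cyclic. The point is that $K/H$ plays a double role: as the kernel of $G/H$ it is nilpotent, and as the complement of $K$ its Sylow subgroups are cyclic or generalized quaternion. I claim $2\notin\pi(K/H)$. Otherwise, by nilpotency the Sylow $2$-subgroup of $K/H$ is a direct factor, so $K/H$ has a unique involution $z$, which is then characteristic; the complement $G/K$ acts on $K/H$ by automorphisms and hence fixes $z$, but a nonidentity complement element of the Frobenius group $G/H$ fixes no nonidentity kernel element, forcing $G/K=1$, a contradiction. Thus $|K/H|$ is odd, all its Sylow subgroups are cyclic, and being nilpotent $K/H$ is cyclic. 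Then $G/K$ embeds faithfully into $\mathrm{Aut}(K/H)$, which is abelian; so $G/K$ is an abelian Frobenius complement, whence its Sylow subgroups are cyclic and $G/K$ is cyclic. This proves (5); moreover the embedding $G/K\hookrightarrow\mathrm{Aut}(K/H)$ shows $|G/K|$ divides $|\mathrm{Aut}(K/H)|=\varphi(|K/H|)$, which is strictly less than $|K/H|$, giving (4) and (6). Finally $H$ is nilpotent and $K/H$, $G/K$ are cyclic, so $G$ is built from solvable factors and is therefore solvable, which is (7). Note that the even-order hypothesis is exactly what guarantees $2\in\pi(G)$ and drives the unique-involution argument that keeps $2$ out of $\pi(K/H)$.

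The technical heart is to show there is no edge of $\Gamma(G)$ joining a prime $r\in\pi(K/H)$ to a prime $t\in\pi_1$; this is the step I expect to be the main obstacle. Such an edge would produce commuting nonidentity elements $R,T$ of orders $r,t$. Since $r\nmid|G/K|$ the element $R$ lies in $K$, and since $r\nmid|H|$ its image in $K/H$ is nontrivial; applying the Frobenius property of $G/H$ to this image gives $C_G(R)\le K$, so $T\in K$. As $t\in\pi_1$ is disjoint from $\pi(K/H)$ while $t\in\pi(K)=\pi(H)\cup\pi(K/H)$, necessarily $t\in\pi(H)$, so the image of $T$ in $K/H$ is trivial and $T\in H\setminus\{1\}$. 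Now the Frobenius property of $K$ forces the centralizer of $T$ into $H$, so $R\in H$; but $r\nmid|H|$, whence $R=1$, a contradiction. The delicate point is precisely that $t$ is a priori only known to lie in $\pi(H)\cup\pi(G/K)$, and one must first descend through the upper Frobenius structure to force $t\in\pi(H)$ before the lower Frobenius structure can be invoked.

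It remains to assemble (1)--(3). Both $\pi_2=\pi(K/H)$ and $\pi_1=\pi(H)\cup\pi(G/K)$ are nonempty and, by the previous paragraph, no edge joins them, so $\Gamma(G)$ is disconnected and $t(G)\ge 2$. Since $G$ is solvable, the Gruenberg--Kegel theorem (the solvable case in \cite{Wil}) gives $t(G)\le 2$, hence $t(G)=2$. As $K/H$ is cyclic, $\pi(K/H)$ induces a complete, and therefore connected, subgraph, so it is exactly one component and the remaining primes $\pi_1$ form the other; since $2\notin\pi(K/H)$ we have $2\in\pi_1$, matching the convention. This identifies the components as $\pi_2=\pi(K/H)$ and $\pi_1=\pi(H)\cup\pi(G/K)$ and completes the proof.
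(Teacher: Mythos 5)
Your proof is correct, but it does not follow the paper's route, because the paper does not prove this lemma at all: it is disposed of by a one-line citation to Lemma 1.7 of Chen's paper on Thompson's conjecture (\cite{Ch3}). What you have done is reconstruct a self-contained proof from classical Frobenius machinery: Thompson's nilpotency of kernels, Burnside's structure of complements (cyclic or generalized quaternion Sylow subgroups), coprimality of kernel and complement, and the fact that the centralizer of a nontrivial kernel element lies in the kernel. Your unique-involution argument forcing $2\notin\pi(K/H)$, the resulting cyclicity of $K/H$ and then of $G/K$ via the faithful embedding into the abelian group $\mathrm{Aut}(K/H)$, and the two-step descent (first $C_G(R)\le K$ via the upper Frobenius structure, then $T\in H$ and $C_K(T)\le H$ via the lower one) that excludes any edge between $\pi(K/H)$ and $\pi(H)\cup\pi(G/K)$ are all sound, and the count $|G/K|\le\varphi(|K/H|)<|K/H|$ gives the strict inequality. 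Two small caveats on your citations: the ``cf.\ Lemma \ref{Fro}'' is slightly off target, since that lemma is stated only for Frobenius groups of even order, whereas you need the general Burnside facts about complements (note $K$ may well have odd order when $2\in\pi(G/K)$); and your appeal to \cite{Wil} should be pinned to the Gruenberg--Kegel bound $t(G)\le 2$ for soluble groups (equivalently: among any three primes dividing the order of a soluble group, two are adjacent), which is proved independently of the structure of $2$-Frobenius groups, so no circularity arises. The trade-off between the two approaches is the obvious one: the paper's citation is short and defers to the literature, while your argument makes the note self-contained and shows that the lemma requires nothing beyond standard Frobenius theory plus the soluble case of Gruenberg--Kegel.
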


\begin{proof}
This is Lemma 1.7 in \cite{Ch3}.
\end{proof}

The following lemma is well-known (see \cite[Theorem 3.3.20]{DJS}).
\begin{lemma} \label{Aut}
Let $R=R_1\times \cdots\times R_k$, where $R_i$ is a direct product
of $n_i$ isomorphic copies of a nonabelian simple group $H_i$ and
$H_i$ and $H_j$ are not isomorphic if $i\neq j$. Then
\begin{center}
{\rm Aut($R$)$\cong$ Aut($R_1$)$\times \cdots\times$ Aut($R_k$) and
Aut($R_i$)$\cong$ (Aut($H_i$))$\wr S_{n_i}$.}
\end{center}
Moreover,
\begin{center}
{\rm Out($R$)$\cong$ Out($R_1$)$\times \cdots\times$ Out($R_k$) and
Out($R_i$)$\cong$ (Out($R_i$))$\wr S_{n_i}$.}
\end{center}
\end{lemma}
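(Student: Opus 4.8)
The plan is to reduce everything to the single structural fact that the simple direct factors of $R$ are canonically recognizable inside $R$, after which the wreath-product description and the passage to outer automorphisms are essentially bookkeeping.

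First I would analyze the normal subgroups of a direct product of nonabelian simple groups. Writing $R=T_1\times\cdots\times T_m$ with each $T_j$ nonabelian simple, I would show that every normal subgroup $N\trianglelefteq R$ is a subproduct $\prod_{j\in I}T_j$ for some $I\subseteq\{1,\dots,m\}$. The key computation is that $[N,T_j]\leq N\cap T_j\trianglelefteq T_j$, so by simplicity of $T_j$ either $T_j\leq N$ or $N$ centralizes $T_j$; in the latter case the projection $\pi_j(N)$ lies in $Z(T_j)=1$, forcing $\pi_j(N)=1$. Collecting the indices $j$ with $T_j\leq N$ then gives $N=\prod_{j\in I}T_j$. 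In particular the \emph{minimal} normal subgroups of $R$ are exactly the individual factors $T_j$, and $Z(R)=1$.

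Next I would use this to make the isotypic decomposition characteristic. Any $\phi\in\mathrm{Aut}(R)$ permutes the minimal normal subgroups, hence the set $\{T_1,\dots,T_m\}$, and since $\phi(T_j)\cong T_j$ it preserves isomorphism type. Therefore each $R_i$ (the product of all factors isomorphic to $H_i$) is a characteristic subgroup, and restriction $\phi\mapsto(\phi|_{R_1},\dots,\phi|_{R_k})$ yields $\mathrm{Aut}(R)\cong\mathrm{Aut}(R_1)\times\cdots\times\mathrm{Aut}(R_k)$, injectivity coming from $R=R_1\cdots R_k$ and surjectivity by forming products of automorphisms. For a single isotypic piece $R_i\cong H^{n}$ with $H=H_i$ and $n=n_i$, the same permutation action gives a $\sigma\in S_n$ with $\phi$ carrying the $j$-th factor isomorphically onto the $\sigma(j)$-th; fixing identifications of all factors with $H$ produces $\alpha_j\in\mathrm{Aut}(H)$, and the assignment $\phi\mapsto(\sigma;\alpha_1,\dots,\alpha_n)$ is a bijective homomorphism onto $\mathrm{Aut}(H)\wr S_n$. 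The crucial input here, and the step I expect to be the main obstacle, is precisely that $\phi(T_j)$ lands in a \emph{single} factor rather than being smeared across several; this is exactly what the subproduct analysis of the previous step guarantees, so the whole argument hinges on the minimal-normal-subgroup classification.

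Finally I would pass to outer automorphisms. Because $Z(R)=1$ we have $\mathrm{Inn}(R)\cong R$, and under the identifications above $\mathrm{Inn}(R)$ corresponds to $\prod_i\mathrm{Inn}(R_i)$ and, within $\mathrm{Aut}(H)\wr S_n$, to the subgroup $\mathrm{Inn}(H)^{n}$ of the base group. Since $\mathrm{Inn}(H)\trianglelefteq\mathrm{Aut}(H)$ and $S_n$ merely permutes coordinates, $\mathrm{Inn}(H)^{n}$ is normal, and quotienting gives $\mathrm{Out}(R_i)=\mathrm{Aut}(H^{n})/\mathrm{Inn}(H^{n})\cong\big(\mathrm{Aut}(H)/\mathrm{Inn}(H)\big)\wr S_n=\mathrm{Out}(H_i)\wr S_{n_i}$, together with $\mathrm{Out}(R)\cong\prod_i\mathrm{Out}(R_i)$. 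This completes the scheme; none of these final quotient computations present real difficulty once the factors are known to be permuted.
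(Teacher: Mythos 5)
Your proposal is correct, but there is nothing in the paper to compare it against line by line: the paper does not prove this lemma at all, it simply declares it well known and cites Robinson \cite[Theorem 3.3.20]{DJS}. What you have written out is, in effect, the standard argument that sits behind that citation, and it is sound. The load-bearing step is exactly where you place it: the classification of normal subgroups of a product of nonabelian simple groups as subproducts, via $[N,T_j]\leq N\cap T_j$, simplicity forcing $N\cap T_j\in\{1,T_j\}$, and triviality of $Z(T_j)$ killing the projection in the centralizing case. This makes the factors the minimal normal subgroups, hence permuted by any automorphism with isomorphism type preserved, so the isotypic components $R_i$ are characteristic and the product and wreath-product decompositions follow; the passage to $\mathrm{Out}$ works because $Z(R)=1$ identifies $\mathrm{Inn}(R)$ with $R$, and $\mathrm{Inn}(H_i)^{n_i}$ is normal in $\mathrm{Aut}(H_i)\wr S_{n_i}$ with quotient $\bigl(\mathrm{Aut}(H_i)/\mathrm{Inn}(H_i)\bigr)\wr S_{n_i}$. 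Two small remarks: first, when you define $\phi\mapsto(\sigma;\alpha_1,\dots,\alpha_n)$ you should fix your composition conventions so that this map is an honest homomorphism onto $\mathrm{Aut}(H)\wr S_n$ rather than an anti-homomorphism, but this is routine bookkeeping; second, your final statement $\mathrm{Out}(R_i)\cong\mathrm{Out}(H_i)\wr S_{n_i}$ silently corrects a typo in the paper's formulation, which reads $\mathrm{Out}(R_i)\cong(\mathrm{Out}(R_i))\wr S_{n_i}$ --- the inner group there should indeed be $\mathrm{Out}(H_i)$, as you prove.
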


\section{Proof of main results}

In this section, we first present the proof of Theorem \ref{Main}.

\noindent {\emph{Proof of Theorem \ref{Main}.}}

 We will proceed our
proof case by case. Suppose that $G$ and $S$ satisfy the hypothesis
of Theorem \ref{Main}. Then by \cite{CCNPW}, one can compute the
number of elements of order $p$, denoted by $|G(p)|$, where $p$ is
the largest prime in $\pi(G)=\pi(S)$. Let $P$ be a Sylow
$p$-subgroup of $G$. Then $P$ is of order $p$ and
$$|G:N_G(P)|=\frac{|G(p)|}{p-1}.$$Therefore we know the order of
$N_G(P)$. From now on, we distinguish the following several cases.

(1) If $S=M_{11}$, then $G\cong S$.

By the hypothesis, $|G|=2^4\cdot 3^2\cdot 5\cdot 11$ and
$|G(11)|=2^5\cdot 3^2\cdot 5$. Let $P$ be a Sylow $11$-subgroup of
$G$. Then $|N_G(P)|=5\cdot 11$. Since $C_G(P)\leq N_G(P)$, we have
that either $N_G(P)=C_G(P)$ or $C_G(P)$ is a cyclic group of order
11. Suppose that $N_G(P)=C_G(P)$. Then $G$ has a normal
$11$-complement $K$ in $G$. Let $Q$ be a Sylow 3-subgroup of $K$
such that $PQ$ is a Hall subgroup of $G$. Let $N$ be a minimal
normal subgroup in $PQ$. If $|N|=11$, then $P$ is normal in $PQ$ and
so $Q\leq N_G(P)$, a contradiction. If $|N|=3$ or $3^2$, then $N\leq
C_G(P)$, which is impossible. Hence,  we must have $C_G(P)=P$.

Therefore, $\Gamma(G)$ is not connected and $\{11\}$ is a component
of $\Gamma(G)$. Thus, by Lemma \ref{non-con}, $G$ is  a Frobenius
group or a 2-Frobenius group or a group satisfying the conditions in
(2) of Lemma \ref{non-con}. Suppose that $G$ is a Frobenius group
with kernel $H$. Then by Lemma \ref{Fro}, we have $|H|=2^4\cdot
3^2\cdot 5$ and so $G$ has a Hall subgroup of order $3^2\cdot 11$, a
contradiction as above. If $G$ is a 2-Frobenius group, then $G$ is
solvable by Lemma \ref{2Fro} and so $G$ also has a subgroup with
order $3^2\cdot 11$, a contradiction.

Therefore, by Lemma \ref{non-con}, we have that $G$ has a normal
series $1\subseteq H\subseteq K\subseteq G$, where $H$ is a
nilpotent $\pi_1$-group, $K/H$ is a nonabelian simple group and
$G/K$ is a $\pi_1$-group such that $|G/K|$ divides the order of the
outer automorphism group of $K/H$. Besides, for $i\geq 2$,
$\pi_i(G)$ is also a component of $\Gamma(K/H)$. Hence, $11$ divides
the order of $K/H$. By \cite[Table 1]{Zava}, we have that $K/H$ is
isomorphic to $L_2(11)$ or $M_{11}$.

Suppose that $K/H\cong L_2(11)$. Since $G/K$ divides the order of
the outer automorphism group of $K/H$ and $|K/H|=2^2\cdot 3\cdot
5\cdot 11$, it follows that $2\cdot 3$ divides the order of $H$ and
therefore $3$ divides the order of $N_G(P)$, a contradiction. Hence,
$K/H\cong M_{11}$ and so $G\cong M_{11}$.

(2) If $S=M_{12}$, then $G\cong S$.

By the hypothesis, $|G|=2^6\cdot 3^3\cdot 5\cdot 11$ and
$|G(11)|=2^7\cdot 3^3\cdot 5$. Let $P$ be a Sylow $11$-subgroup of
$G$. Then $N_G(P)$ is of order $5\cdot 11$. Using a similar argument
as in case (1), we have that $G$ has a normal series $1\subseteq
H\subseteq K\subseteq G$, where $H$ is a nilpotent $\pi_1$-group,
$K/H$ is a nonabelian simple group and $G/K$ is a $\pi_1$-group such
that $|G/K|$ divides the order of the outer automorphism group of
$K/H$. Besides, for $i\geq 2$, $\pi_i(G)$ is also a component of
$\Gamma(K/H)$. Hence, $11$ divides the order of $K/H$. By
\cite[Table 1]{Zava}, we have that $K/H$ is isomorphic to $L_2(11)$,
$M_{11}$ or $M_{12}$. It is easy to check that $K/H$ must isomorphic
to $M_{12}$ and so is $G$, as desired.

(3) If $S=M_{22}, M_{23}, M_{24}, HS, McL, Co_3, Co_2, Fi_{23},
Co_1$, then it follows form \cite{CCNPW} that the order of $N_G(P)$
is either $5\cdot 11$ or $11\cdot 23$. Hence we obtain similarly
that $G\cong S$ in these cases.

(4) If $S=J_2$, then $G\cong S$.

By \cite{CCNPW}, $|G|=2^7\cdot 3^3\cdot 5^2\cdot 7$ and
$|G(7)|=2^7\cdot 3^3\cdot 5^2$. It follows that if $P$ is a  Sylow
$7$-subgroup of $G$, then $|N_G(P)|=2\cdot 3\cdot 7$. Let $K$ be the
largest normal solvable subgroup of $G$. Then $G\neq K$. Otherwise,
if $G$ is a solvable group, then $G$ has  a Hall subgroup $QP$ of
order $5^2\cdot 7$, where $Q$ is a Sylow $5$-subgroup of $G$. Pick a
minimal normal subgroup $N$ of $QP$. If $N=P$, then $Q\leq N_G(P)$,
which is impossible. If $N$ is a cyclic group of order 5 or an
elementary abelian group of order 25, then $P$ acts trivially on $N$
and therefore $N\leq N_G(P)$, another contradiction.  Hence $K$ is a
proper subgroup of $G$.

Furthermore, we assert that $7\notin \pi(K)$. If $7\in \pi(K)$, then
$P$ is a Sylow $7$-subgroup of $K$ and so $G=N_G(P)K$. By the order
of $N_G(P)$, $N_G(P)$ is a solvable group, which implies that $G$ is
also solvable, a contradiction. Hence $7\notin \pi(K)$. Let $N$ be a
minimal normal subgroup of $G/K$. Then $N$ is a direct product of
nonabelian simple groups and without loss of generality, we can
assume that $7\in \pi(N)$. Otherwise, if $7$ does not divide the
order of any minimal normal subgroup of $G/K$, then $2^{14}$ will
divide $|G|$ by Lemma \ref{Aut}, a contradiction by our hypothesis.
Moreover we have that $N$ is a simple group. By \cite[Table
1]{Zava}, we obtain that $N$ is isomorphic to $$J_2, A_8, L_3(4),
U_3(3), L_2(8), L_2(7).$$

If $N\cong A_8$, then $N$ is the unique minimal normal subgroup of
$G/K$ by the orders of $G$ and $N$ and the above arguments. It
follows that $5\in \pi(K)$. Take a Hall $\{5, 7\}$-subgroup $PK_5$
of $PK$, where $K_5$ is a Sylow $5$-subgroup of $K$. Then $K_5\leq
C_G(P)$, a contradiction.

Similarly, $N$ is not isomorphic to $L_3(4)$.

Suppose that $N\cong A_7$. If $N$ is the unique normal subgroup of
$G/K$, then $5\in \pi(K)$, a contradiction. If there exists another
minimal normal subgroup in $G/K$, say $L$, then by the order of $G$,
we know $5$ divides the order of $N_{G/K}(PK/K)$. But
$N_{G/K}(PK/K)=N_G(P)K/K$ since $7\notin \pi(K)$, a contradiction.

Similarly, we have that $N$ is not isomorphic to $U_3(3), L_2(8),
L_2(7)$. Therefore, $N$ must be isomorphic to $J_2$ and so $G\cong
J_2$, as desired.

(5) If $S=He$, then $G\cong S$.

By the hypothesis, we have that $|G|=2^{10}\cdot 3^3\cdot 5^2\cdot
7^3\cdot 17$ and $|G(17)|=2^{11}\cdot 3^3\cdot 5^2\cdot 7^3$ for
$p=17$. Thus $|N_G(P)|=2^3\cdot 17$, where $P$ is a Sylow
$17$-subgroup of $G$. Let $K$ be the largest solvable normal
subgroup in $G$. Similar to case (12), we can deduce that $G\neq K$
and $G/K$ has a minimal normal subgroup $S$ such that $17\in
\pi(S)$. Then, clearly,  $S$ is a nonabelian simple group and by
\cite[Table 1]{Zava}, we see that $S$ is isomorphic to one of the
following groups:$$He, S_4(4), L_2(16), L_2(17).$$ If $S$ is not
isomorphic to $He$, then $3$ or $3^2$ divides the order of $N_G(P)$,
which is impossible. Hence $S\cong He$ and so the result follows.

(6) $S=Suz$.

By \cite{CCNPW}, $|G|=2^{13}\cdot 3^7\cdot 5^2\cdot 7\cdot 11\cdot
13$ and $|G(13)|=2^{14}\cdot 3^7\cdot 5^2\cdot 7\cdot 11$. It
follows that $|N_G(P)|=2\cdot 3\cdot 13$, where $P$ is a Sylow
$13$-subgroup of $G$. Let $\pi^1=\{7, 11, 13\}$. We claim that $G$
has a chief factor $H/K$ such that $\pi^1\subseteq \pi(H/K)$. Let
$H/K$ be a chief factor of $G$ such that $\pi(H/K)\cap \pi^1\neq
\emptyset$ and $\pi(K)\cap \pi^1=\emptyset$. We first show that $13$
divides the order of $H/K$. If not, then $13$ divides the order of
$G/H$. Since $\pi(H/K)\cap \pi^1\neq \emptyset$, $7\in \pi(H/K)$ or
$11\in \pi(H/K)$. If $7\in \pi(H/K)$, then $G=N_G(G_7)H$, where
$G_7$ is a Sylow 7-subgroup of $G$. It follows that $13$ divides the
order of $N_G(G_7)$ and so $7$ divides the order of $N_G(P)$, a
contradiction. Similarly we get that $11$ divides the order of
$N_G(P)$ if $11\in \pi(H/K)$. Hence $13\in \pi(H/K)$. Discussing
repeatedly  as above, we obtain that both 7 and 11 are contained in
$\pi(H/K)$. It is easy to see that $H/K$ is a nonabelian simple
group. By \cite[Table 1]{Zava}, $H/K$ is isomorphic to $Suz$ or
$A_{13}$.

Suppose that $H/K\cong A_{13}$. Then $|H/K|=|A_{13}|=2^9\cdot
3^5\cdot 5^2\cdot 7\cdot 11\cdot 13$ and thus $|K|$ divides
$2^4\cdot 3^2$. Let $N$ be a minimal normal subgroup of $G$
contained in $K$. Then $N$ is an elementary group of order $2^i$ or
$3^j$ with $i=1, 2, 3, 4$ and $j=1, 2$. If $|N|=3^2$ or $2^i$ with
$2\leq i\leq 4$, then $P$ acts trivially on $N$, and therefore
$N\leq N_G(P)$, a contradiction. Thus, $|G/H|\geq 2^3\cdot 3$. Note
that $N_{G/K}(PK/K)=N_G(P)K/K$ since $|N|$ is coprime to 13 and
$|N_{H/K}(PK/K)|=13(13-1)/2$. It follows that $H/K$ is the unique
minimal normal subgroup of $G/K$ and consequently $|G/H|$ divides
the order of outer automorphism group of $H/K$, a contradiction.
Hence $H/K\cong Suz$ and thus $G\cong Suz$.

(7) $S=J_1$, then $G\cong S$.

In this case, $|G|=2^3\cdot 3\cdot 5\cdot 7\cdot 11\cdot 19$ and
$|G(19)|=2^3\cdot 3^2\cdot 5\cdot 7\cdot 11$ for $p=19$. It follows
that $N_G(P)$ is of order $2\cdot 3\cdot 19$ with $P$ a Sylow
$19$-subgroup of $G$. Set $\pi^1=\{5, 7, 11, 19\}$. Then, arguing as
in the case (14), one have that $G$ has a chief factor $H/K$ such
that $\pi^1\subseteq \pi(H/K)$. It follows from \cite[Table 1]{Zava}
that $H/K\cong J_1$ and so $G$ must be isomorphic to $J_1$.

(8) If $S=$$J_3$, then $G\cong S$.

By the hypothesis, we have that $|G|=2^7\cdot 3^5\cdot 5\cdot
17\cdot 19$ and $|G(19)|=2^8\cdot 3^5\cdot 5\cdot 17$ for $p=19$.
Then $N_G(P)$ is of order $3^2\cdot 19$. Let $\pi^1=\{5, 17, 19\}$.
Then, similarly as above,  $G$ has a chief factor $H/K$ such that
$\pi^1\subseteq \pi(H/K)$. By \cite[Table 1]{Zava}, $H/K$ must be
isomorphic to $J_3$ and so the result follows.

(9) If $S=Ru$, then $G\cong S$.

In this case, we have that $|G|=2^{14}\cdot 3^3\cdot 5^3\cdot 7\cdot
13\cdot 29$ and $|G(29)|=2^{15}\cdot 3^3\cdot 5^3\cdot 7\cdot 13$
for $p=29$. Let $P$ be a Sylow $29$-subgroup of $G$. Then
$|N_G(P)|=2\cdot 7\cdot 29$. Take $\pi^1=\{13, 29\}$. Then the
result follows similarly.

(10) If $S=O'N$, then $G\cong S$.

By the hypothesis, we have that $|G|=2^9\cdot 3^4\cdot 5\cdot
7^3\cdot 11\cdot 19\cdot 31$ and $|G(31)|=2^{10}\cdot 3^4\cdot
5\cdot 7^3\cdot 11\cdot 19$. Then $|N_G(P)|=3\cdot 5\cdot 31$ for
some Sylow $31$-subgroup $P$ of $G$.  In this case, if we let
$\pi^1=\{11, 19, 31\}$, then the result follows similarly.

(11) If $S=Fi_{22}, HN, Ly, Th, J_4, Fi_{24}', B, M$, then we can
pick appropriate $\pi^1$ for these simple groups  and check as
above.















By the foregoing arguments, we obtain that Conjecture \ref{MC} is
true for all sporadic groups and Theorem \ref{Main} is thus proved.
\qed

Now we proceed  to prove Theorem \ref{alt}.

\noindent {\emph{Proof of Theorem \ref{alt}.}}

(1) Let $G$ be a group satisfying the hypothesis and $p$ be the
largest prime dividing the order of $A_n$ with $n\neq 8, 10$. Then
$|G|_p=|A_n|_p=p$ and $|G(p)|=|A_n(p)|$. Therefore $G$ and $A_n$
have the same number of Sylow $p$-subgroups and so $G\cong A_n$ by
Bi's result in \cite{Bi}.

(2) Suppose that $|G|=|A_8|$ and $|G(7)|=|A_8(7)|=2^7\cdot 3^2\cdot
5$. Let $P$ be a Sylow $7$-subgroup of $G$. Then $N_G(P)$ is of
order $3\cdot 7$. If $N_G(P)=C_G(P)$, then $G$ has a normal
$7$-complement $K$. Then it is easy to see for some Sylow
$5$-subgroup $Q$ of $K$, $Q\leq N_G(P)$, a contradiction. Hence,
$C_G(P)=P$, which implies that $\Gamma(G)$ is not connected and
$\{7\}$ is a component in $\Gamma(G)$. By Lemmas \ref{non-con},
\ref{Fro} and \ref{2Fro}, we conclude that $G$ has a normal series
$1\subseteq H\subseteq K\subseteq G$, where $H$ is a nilpotent
$\pi_1$-group, $K/H$ is a nonabelian simple group and $G/K$ is a
$\pi_1$-group such that $|G/K|$ divides the order of the outer
automorphism group of $K/H$. Besides, for $i\geq 2$, $\pi_i(G)$ is
also a component of $\Gamma(K/H)$. In particular, $7$ is a component
of $\Gamma(K/H)$. By \cite[Table 1]{Zava}, we see that $K/H$ is
isomorphic to one of
$$A_8, L_3(4), A_7, L_2(8), L_2(7).$$

Suppose that $K/H\cong A_7$. Then $|K/H|=2^3\cdot 3^2\cdot 5\cdot 7$
and so $2^2$ divides the order of $H$. Clearly, $P$ acts trivially
on $H$ and it follows that $H\leq C_G(P)\leq N_G(P)$, a
contradiction.

If $K/H\cong L_2(8)$ or $L_2(7)$, then $5$ does not divides the
order of $\mathrm{Aut}(K/H)$ by \cite{CCNPW} and so $5$ divides the
order of $H$. Let $H_5$ be the Sylow $5$-subgroup of $H$. Then
$H_5\leq C_G(P)$ as above.

Therefore $K/H\cong A_8$ or $L_3(4)$ and consequently $G\cong A_8$
or $L_3(4)$.

(3) Suppose that $|G|=|A_{10}|$ and $|G(7)|=|A_{10}(7)|=2^7\cdot
3^3\cdot 5^2$. Then $|N_G(P)|=2\cdot 3^2\cdot 7$ with $P$ a Sylow
$7$-subgroup of $G$. Arguing as in the proof of case (3) of Theorem
\ref{Main}, we get that if $K$ is the largest solvable normal
subgroup of $G$, then $G/K$ has a minimal normal subgroup $N$ such
that $7\in \pi(N)$. It is clear that $N$ is a simple group. By
\cite[Table 1]{Zava}, we see that $N$ is isomorphic to one of
$$A_{10}, J_2, A_8, L_3(4), A_7, U_3(3), L_2(8), L_2(7).$$

Since $N_{G/K}(PK/K)=N_G(P)K/K$, we see that $N$ is the unique
minimal normal subgroup of $G/K$. Otherwise, $|N_{G/K}(PK/K)|\geq
2^2\cdot 3\cdot 5\cdot 7$, a contradiction. Let $N=H/K$. Then we
have that $|G/H|$ divides the order of outer automorphism group of
$N$.

If $N\cong A_8$, then $5$ divides the order of $K$ and so one can
deduce that $5$ divides $|N_G(P)|$, a contradiction.

If $N\cong L_3(4)$, then $5$ divides the order of $K$ since
$|\mathrm{Out}(L_3(4))|=12$ and so we have a contradiction.

If $N\cong A_7, U_3(3), L_2(8), L_2(7)$, we also have that $5$
divides $|K|$. Hence we obtain that $N\cong J_2$ or $A_{10}$.
Suppose that $N\cong J_2$. Then $|N|=2^7\cdot 3^3\cdot 5^2\cdot 7$
and $G/K=N$ since $|\mathrm{Out}(J_2)|=2$. It follows that $|K|=3$.
Clearly, $C_G(K)>K$, which forces $C_G(K)=G$ and so $K=Z(G)$. Since
$G=G'Z(G)$, we have $G'\cap Z(G)=Z(G)$ or 1. But the Schur
multiplier of $J_2$ is a cyclic group of order 2. Hence $G'\cap
Z(G)=1$ and so $G\cong J_2\times Z_3$, where $Z_3$ is a cyclic group
of order 3. By \cite{CCNPW}, we get that $|J_2(7)|=2^7\cdot 3^3\cdot
5^2$ and so $|G(7)|=2^7\cdot 3^3\cdot 5^2$, as wanted. At last, if
$N\cong A_{10}$, then $G\cong A_{10}$ as well.

Thus, the proof is complete.\qed

\end{document}